\newtheorem{Thm}{Theorem}
\newtheorem{Lem}[Thm]{Lemma}
\numberwithin{Thm}{section}
\theoremstyle{definition}
\newtheorem{Def}[Thm]{Definition}
\newtheorem{Rem}[Thm]{Remark}
\def\EE{\mathcal{E}}
\def\WW{\mathcal{W}}
\def\N{\mathbb{N}}
\def\sE{\mathcal{E}}
\def\HH{\mathcal{H}}
\def\DD{\mathcal{D}}
\def\PP{\mathbf{P}}
\def\CC{\mathbf{C}}
\def\RR{\mathbf{R}}
\def\C{\mathbf{C}}
\newsavebox\myboxA
\newsavebox\myboxB
\newlength\mylenA
\newcommand*\xoverline[2][0.75]{%
    \sbox{\myboxA}{$\m@th#2$}%
    \setbox\myboxB\null
    \ht\myboxB=\ht\myboxA%
    \dp\myboxB=\dp\myboxA%
    \wd\myboxB=#1\wd\myboxA
    \sbox\myboxB{$\m@th\overline{\copy\myboxB}$}
    \setlength\mylenA{\the\wd\myboxA}
    \addtolength\mylenA{-\the\wd\myboxB}%
    \ifdim\wd\myboxB<\wd\myboxA%
       \rlap{\hskip 0.5\mylenA\usebox\myboxB}{\usebox\myboxA}%
    \else
        \hskip -0.5\mylenA\rlap{\usebox\myboxA}{\hskip 0.5\mylenA\usebox\myboxB}%
    \fi}
\title{Exotic holomorphic Engel structures on $\CC^4$}
\author{R. Coelho}
\address{Mathematisches Institut, LMU M\"unchen, Theresienstr. 39, 80333 M\"unchen, Germany}
\email{mcoelho@math.lmu.de}
\author{N. Pia}
\address{Dipartimento di Matematica e Informatica, Universit\`a degli Studi di Cagliari, Palazzo delle Scienze, Via Ospedale 72, 09124 Cagliari, Italy}
\email{nicola.pia@unica.it}
\begin{document}
%
%
\begin{abstract}

A holomorphic Engel structure determines a flag of distributions $\WW\subset \DD\subset \EE$. 
We construct examples of Engel structures on $\CC^4$ such that each of these distributions is hyperbolic in the sense that it has no tangent copies of $\CC$. 
We also construct two infinite families of pairwise non-isomorphic Engel structures on $\CC^4$ by controlling the curves $f:\CC\to \CC^4$ tangent to $\WW$.
The first is characterised by the topology of the set of points in $\CC^4$ admitting $\WW$-lines, and the second by a finer geometric property of this set. 
A consequence of the second construction is the existence of uncountably many non-isomorphic holomorphic Engel structures on $\CC^4$.
\end{abstract}
\maketitle
%
%
\section{Introduction}
A \emph{holomorphic Engel structure} on a complex manifold $M$ of complex dimension $4$ is a holomorphic subbundle $\DD\hookrightarrow TM$ of complex rank $2$ which is maximally non-integrable.
More precisely $[\DD,\DD]=\EE$ has constant rank $3$ and satisfies $[\EE,\EE]=TM$ (a 3-distribution satisfying this condition is called a \emph{holomorphic even contact structure}).
Every holomorphic even contact structure $\EE$ admits a unique holomorphic line field $\WW\subset\EE$ such that $[\WW,\EE]\subset\EE$.
This line field $\WW$ is called \emph{characteristic line field}.
If $\DD$ is an Engel structure and $\EE=[\DD,\DD]$ is its associated even contact structure then the characteristic line field $\WW$ satisfies $\WW\subset\DD$.
Hence an Engel structure $\DD$ determines a flag of distributions $\WW\subset\DD\subset\EE$.

Every holomorphic Engel structure $(M,\DD)$ is locally isomorphic to the complex Euclidean space $\CC^4$ with coordinates $(w,x,y,z)$ and the Engel structure given by
\[
\DD_{st} = \ker(dy - z dx)\cap \ker(dz-wdx).
\]
The associated even contact structure is $\EE_{st}=\ker(dy-zdx)$ and the characteristic line field is $\WW_{st}=\ker(dx\wedge dy\wedge dz)$.

These structures are the holomorphic analogues of the usual Engel structures.
Together with line fields, contact structures and even contact structures, these are the only topologically stable distributions (see \cite{cartan}).
The existence of an orientable Engel structure on a closed orientable (real) 4-manifold $M$ implies that $M$ is parallelizable. 
Conversely the existence of Engel structures on parallelizable 4-manifolds was established in \cite{vogel}.
The geometry of these structures is closely related to even contact structures, which are known to satisfy a complete $h$-principle (see \cite{mcduff}).
An existence $h$-principle has been established for Engel structures in \cite{cp3}.

Holomorphic Engel structures on closed complex $4$-manifolds have been studied in \cite{presas}.
The only known constructions are the Cartan prolongation of a holomorphic contact structure and the Lorentz tube of a holomorphic conformal structure on a $3$-manifold.
These two families of structures are classified in the projective case, and the main result in \cite{presas} is a partial classification of Engel structures on closed projective manifolds.
The existence of a holomorphic Engel structure on a closed complex manifold which is not a Cartan prolongation or a Lorentz tube remains an open problem.

We are interested in constructing non-standard holomorphic Engel structures on $\CC^4$.
Forstneri\v{c} constructed non-standard holomorphic contact structures on $\CC^{2n+1}$ in \cite{forstneric}.
There the idea is to find a Fatou-Bieberbach domain where the standard holomorphic contact structure is \emph{hyperbolic} in a directed sense, as explained below.
One of the aims of this note is to use the same method to prove the analogous statement for holomorphic Engel structures.
In what follows, given a distribution $\HH\to T\CC^4$, we will use the terms \emph{$\HH$-line} or \emph{line tangent to $\HH$} to designate a non-constant holomorphic map $f:\CC\to\CC^4$ such that $f'(\zeta)\in\HH_{f(\zeta)}$ for all $\zeta\in\CC$.
If no ambiguity concerning the distribution may arise, we also use \emph{horizontal line} as a synonym for $\HH$-line.

\begin{Thm}\label{mainthm}
On $\CC^4$ there are Engel structures $\DD_\EE$, $\DD_\DD$ and $\DD_\WW$ with the following properties
\begin{enumerate}
 \item $\DD_\EE$ admits no lines tangent to its associated even contact structure;
 \item $\DD_\DD$ admits no $\DD_\DD$-lines but does admit lines tangent to its associated even contact structure;
 \item $\DD_\WW$ admits no lines tangent to its characteristic foliation but does admit $\DD_\WW$-lines.
\end{enumerate}
In particular these Engel structures are pairwise non-isomorphic and not isomorphic to the standard Engel structure $(\CC^4,\DD_{st})$. 
\end{Thm}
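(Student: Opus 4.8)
The plan is to construct each of the three Engel structures $\DD_\EE$, $\DD_\DD$, $\DD_\WW$ on $\CC^4$ so that a different layer of the flag $\WW\subset\DD\subset\EE$ exhibits a hyperbolicity phenomenon in the directed sense. Following the strategy that Forstnerič used for holomorphic contact structures, the idea is to find a Fatou–Bieberbach domain $\Omega\subset\CC^4$ biholomorphic to $\CC^4$ on which the relevant distribution, when pulled back from the standard one, admits no nonconstant entire horizontal lines. Then an exotic Engel structure on $\CC^4$ is obtained by transporting the standard structure through the biholomorphism $\CC^4\cong\Omega$; the resulting structure is isomorphic to $\DD_{st}|_\Omega$ and so it genuinely is Engel, but its directed entire curves are exactly the images of the standard ones meeting $\Omega$, which we have arranged to be absent.

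The key point I want to record before the three constructions is the final sentence: \emph{these Engel structures are pairwise non-isomorphic and not isomorphic to the standard one}. I would prove this purely formally from properties (1)--(3), using that an isomorphism of Engel structures must carry the whole flag to the flag and hence carry $\EE$-lines to $\EE$-lines, $\DD$-lines to $\DD$-lines, and $\WW$-lines to $\WW$-lines. Concretely, the three \emph{invariants} to track are:

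\begin{enumerate}
\item whether there exists a line tangent to $\EE$;
\item whether there exists a line tangent to $\DD$;
\item whether there exists a line tangent to $\WW$.
\end{enumerate}

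An Engel isomorphism $\Phi$ is a biholomorphism with $\Phi_*\DD=\DD'$; since $\EE=[\DD,\DD]$ and $\WW$ is canonically determined by $\EE$, we automatically get $\Phi_*\EE=\EE'$ and $\Phi_*\WW=\WW'$. Because $\Phi$ is a biholomorphism of $\CC^4$, if $f:\CC\to\CC^4$ is an $\HH$-line then $\Phi\circ f$ is an $\HH'$-line, and conversely via $\Phi^{-1}$. Hence each of the three yes/no invariants above is preserved under isomorphism.

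It then remains only to read off the invariant vectors from the theorem and observe that they are pairwise distinct. Writing the triple as (exists $\EE$-line, exists $\DD$-line, exists $\WW$-line), the structures realise, respectively, $(\text{no},\ast,\ast)$ for $\DD_\EE$, $(\text{yes},\text{no},\ast)$ for $\DD_\DD$, and $(\text{yes},\text{yes},\text{no})$ for $\DD_\WW$, using the nesting $\WW\subset\DD\subset\EE$ which forces every $\WW$-line to be a $\DD$-line and every $\DD$-line to be an $\EE$-line. The standard structure $\DD_{st}$ admits $\WW_{st}$-lines (for instance the affine lines in the $y$-direction, along which all the defining forms vanish), hence realises $(\text{yes},\text{yes},\text{yes})$. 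Comparing these four triples in the first coordinate where they differ shows that no two can be exchanged by an isomorphism, which is exactly the claim. The genuinely hard part is not this bookkeeping but the analytic core hidden in (1)--(3): producing Fatou–Bieberbach domains on which the successive distributions are hyperbolic, and in particular arranging that $\DD_\DD$ kills all $\DD$-lines while \emph{retaining} some $\EE$-line, and that $\DD_\WW$ kills $\WW$-lines while retaining $\DD$-lines, since these require the hyperbolicity to be installed at precisely one level of the flag without propagating to the coarser level.
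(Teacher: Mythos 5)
Your bookkeeping for the final assertion is correct and matches what the paper leaves implicit: an Engel isomorphism carries the whole flag to the flag (since $\EE=[\DD,\DD]$ and $\WW$ is canonically determined by $\EE$), hence preserves the three existence invariants, and the resulting triples $(\mathrm{no},\mathrm{no},\mathrm{no})$, $(\mathrm{yes},\mathrm{no},\mathrm{no})$, $(\mathrm{yes},\mathrm{yes},\mathrm{no})$, $(\mathrm{yes},\mathrm{yes},\mathrm{yes})$ are pairwise distinct. One slip there: the $y$-direction is not tangent to $\WW_{st}=\ker(dx\wedge dy\wedge dz)$, nor even to $\EE_{st}=\ker(dy-z\,dx)$; the characteristic direction of the standard structure is $\partial_w$, so the correct witness for $\DD_{st}$ admitting $\WW_{st}$-lines is $\zeta\mapsto(w_0+\zeta,x_0,y_0,z_0)$.

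The genuine gap is that everything in items (1)--(3) is asserted rather than proved; you say yourself that the analytic core is ``hidden'' in them, but that core \emph{is} the proof. What is needed, and what the paper supplies, is the following. For $\DD_\EE$: an obstacle set $A=\bigcup_i 2^{i-1}\partial D^3_{(w,x,z)}\times c_i\overline{D}_y$ together with a Cauchy-estimate lemma showing that any $\EE_{st}$-horizontal disc avoiding $A$ has bounded derivative at its centre, so the $\EE_{st}$-directed Finsler metric on $\CC^4\setminus A$ is bounded below and no $\EE$-line exists there; then Forstneri\v{c}'s theorem produces a Fatou--Bieberbach domain avoiding $A$. For $\DD_\DD$: the obstacle set $B$ must be chosen inside a product $K\times\CC_z$ with $K\subset\CC^3_{(w,x,y)}$, so that the Fatou--Bieberbach domain can be taken of the form $\Omega\times\CC$ and the vertical lines $\zeta\mapsto(w_0,x_0,y_0,\zeta)$ --- tangent to $\EE_{st}$ but not to $\DD_{st}$ --- survive inside it. This is exactly the point you flag as ``installing hyperbolicity at one level without propagating to the coarser level,'' and it is resolved by this specific product structure, not by any general principle; without it, part (2) of the statement is unsupported. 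Similarly for $\DD_\WW$ one takes $K\subset\CC^3_{(w,y,z)}$, uses Liouville's theorem to show every entire $\WW_{st}$-curve $\zeta\mapsto(w(\zeta),x_0,y_0,z_0)$ must meet $K$, and checks that the $\DD_{st}$-line $\zeta\mapsto(0,\zeta,0,0)$ persists in $\Omega\times\CC$. As written, your proposal is a restatement of the theorem together with a correct but easy deduction of its last sentence.
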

As we verify below, the standard Engel structure admits many $\DD_{st}$-lines, including many tangent to the characteristic foliation.

Controlling the geometry of the characteristic foliation, we are able to construct infinite families of non-isomorphic holomorphic Engel structures.

\begin{Thm}\label{mainthm2}
For every $n\in\N\cup\{\infty\}$ there exists an Engel structure $\DD_n$ on $\CC^4$ for which the only $\DD_n$-lines are tangent to the characteristic foliation $\WW_n$, and such that
\[
 L_n:=\{p\in\CC^4:\exists f:\C\to\CC^4\ \DD_n\text{-line with }f(0)=p\}
\]
is a proper subset of $\CC^4$ which has exactly $n$ connected components for $n\in\N$, and $L_\infty=\CC^4$.
\end{Thm}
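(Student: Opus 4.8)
The plan is to obtain each $\DD_n$ by transporting the standard Engel structure to a Fatou–Bieberbach domain. Concretely, I would look for a domain $\Omega_n\subset\CC^4$ biholomorphic to $\CC^4$ via some $\Phi_n\colon\CC^4\xrightarrow{\sim}\Omega_n$ and set $\DD_n:=\Phi_n^*(\DD_{st}|_{\Omega_n})$. Since $\Phi_n$ is a biholomorphism, $\DD_n$-lines correspond to $\DD_{st}$-lines whose image lies in $\Omega_n$, the characteristic foliation $\WW_n=\Phi_n^*\WW_{st}$ corresponds to the vertical foliation $\langle\partial_w\rangle$ of $\Omega_n$, and $\Phi_n(L_n)$ is exactly the set of points of $\Omega_n$ lying on an entire vertical curve contained in $\Omega_n$. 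As $\Phi_n$ preserves connected components, the whole statement reduces to building $\Omega_n$ with prescribed intersection geometry against $\DD_{st}$-lines. The first step is therefore the classification of $\DD_{st}$-lines: writing $f=(w,x,y,z)$, the tangency equations read $y'=z\,x'$ and $z'=w\,x'$, so either $x'\equiv0$ — whence $x,y,z$ are constant and $f$ is a (characteristic) vertical $w$-curve — or $x$ is non-constant, in which case the projection $\pi\circ f=(x,y,z)$ is a non-constant entire curve tangent to the standard contact structure $\xi=\ker(dy-z\,dx)$ on $\CC^3$, where $\pi(w,x,y,z)=(x,y,z)$.

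This dichotomy dictates the design of $\Omega_n$. To kill all non-characteristic $\DD_{st}$-lines at once, I would place $\Omega_n$ over a contact-hyperbolic base, i.e. $\Omega_n\subset\Omega'\times\CC_w$ where $\Omega'\subset\CC^3$ is a Fatou–Bieberbach domain admitting no non-constant $\xi$-line; such an $\Omega'$ is the $n=1$ instance of the construction in \cite{forstneric}. Any $\DD_{st}$-line in $\Omega_n$ then projects into $\Omega'$, so a non-characteristic one would produce a non-constant $\xi$-line in $\Omega'$, which is impossible; hence every $\DD_{st}$-line in $\Omega_n$ is vertical, as required. The case $n=\infty$ is now immediate: take $\Omega_\infty:=\Omega'\times\CC_w\cong\CC^3\times\CC=\CC^4$. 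Every fibre $(\Omega_\infty)_b=\CC$ carries vertical lines through each of its points, so $\Phi_\infty(L_\infty)=\Omega_\infty$ and $L_\infty=\CC^4$.

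For finite $n$ the remaining freedom is the shape of the vertical fibres $(\Omega_n)_b=\{w:(w,b)\in\Omega_n\}$. A vertical entire curve exists in $(\Omega_n)_b$ precisely when this planar fibre is non-hyperbolic, i.e. equal to $\CC$ or to $\CC$ minus a point, while a relatively compact — or otherwise hyperbolic — fibre admits no non-constant entire curve. I would therefore fix $n$ balls $B_1,\dots,B_n\subset\Omega'$ with pairwise disjoint closures and aim for $(\Omega_n)_b=\CC$ when $b\in\bigcup_iB_i$ and $(\Omega_n)_b$ bounded for the remaining $b\in\Omega'$, schematically $\Omega_n=\bigl(\bigcup_i B_i\times\CC\bigr)\cup(\Omega'\times D)$ for a bounded disc $D$. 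With such a domain the base points carrying vertical lines are exactly those of $\bigsqcup_iB_i$, so $\Phi_n(L_n)=\bigsqcup_{i=1}^nB_i\times\CC$ has exactly $n$ connected components and is a proper subset of $\Omega_n$; hence $L_n\subsetneq\CC^4$ with $n$ components, giving the claim for $n\in\N$.

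The essential difficulty is to realise each finite-$n$ domain $\Omega_n$ as biholomorphic to $\CC^4$ while keeping it inside the contact-hyperbolic tube $\Omega'\times\CC$ and enforcing the fibrewise dichotomy (full fibres over the balls, bounded fibres elsewhere). I expect this to be the heart of the argument and to require Andersén–Lempert theory: one would exhibit $\Omega_n$ as the image of $\CC^4$ under a carefully assembled biholomorphism, or as an increasing union of Runge, holomorphically convex pieces shaped so that exactly $n$ vertical cylinders escape to infinity while the complementary region stays thin. Along the way one must check that the bounded fibres genuinely contain no non-constant entire curve, that the transition regions between the cylinders and the thin part remain hyperbolic so that no extra components of $L_n$ appear, and that the base-hyperbolicity inherited from $\Omega'$ is not destroyed by the fibre construction, so that no non-characteristic $\DD_{st}$-line is inadvertently created.
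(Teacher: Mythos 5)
Your reduction to Fatou--Bieberbach domains, your classification of $\DD_{st}$-lines into characteristic $w$-curves versus curves projecting to nonconstant Legendrian curves of $\ker(dy-z\,dx)$ in $\CC^3$, and your $n=\infty$ construction $\Omega_\infty=\Omega'\times\CC_w$ over Forstneri\v{c}'s contact-hyperbolic domain all agree with the paper (which packages the same object as the Cartan prolongation $\PP(\xi_h)\cong\CC^3\times\CC\PP^1$ minus a section). The gap is in the finite-$n$ case, where you stop exactly at the step that carries all the content: the existence of $\Omega_n$. Worse, the specific shape you propose cannot be realised. The set $\bigl(\bigcup_i B_i\times\CC\bigr)\cup(\Omega'\times D)$ is a Hartogs-type domain over $\Omega'$ whose fibre is the fixed bounded disc $D$ outside $\bigcup_iB_i$ and all of $\CC$ over $\bigcup_iB_i$. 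Writing the fibre radius as $e^{-\varphi(b)}$, pseudoconvexity of such a domain forces $\varphi$ to be plurisubharmonic on the base; but here $\varphi\equiv-\infty$ on the nonempty open set $\bigcup_iB_i$ and is finite elsewhere on the connected set $\Omega'$, which no plurisubharmonic function can do. So this $\Omega_n$ is not pseudoconvex, hence not Stein, hence not biholomorphic to $\CC^4$, and no Anders\'en--Lempert argument can produce it. The structural point you are missing is that the locus of full vertical fibres must be \emph{thin} (a closed analytic subset), not open.

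That is exactly how the paper proceeds, using the Buzzard--Forn\ae ss theorem (Theorem \ref{buzz}): for the $n$ parallel lines $\tilde L_n=\bigcup_{k=1}^n\CC\times\{k\}\subset\CC^2_{(w,x)}$ there is a Fatou--Bieberbach domain $\Omega_n\subset\CC^2$ containing $\tilde L_n$ such that every nonconstant entire curve in $\Omega_n$ meets $\tilde L_n$ (indeed infinitely often, $\Omega_n\setminus\tilde L_n$ being Kobayashi hyperbolic). Taking $\Psi_n=\Phi_n\times\mathrm{id}$ on $\CC^2_{(w,x)}\times\CC^2_{(y,z)}$ and pulling back $\DD_\infty$, any vertical curve $\zeta\mapsto(w(\zeta),x_0,y_0,z_0)$ contained in $\Omega_n\times\CC^2$ with $w$ nonconstant forces $x_0\in\{1,\dots,n\}$, while the lines $\CC\times\{k\}$ survive by construction; hence $L_n$ is a disjoint union of $n$ closed copies of $\CC^3$, a proper subset with exactly $n$ components (the theorem does not require these components to be open). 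To repair your argument, replace the open cylinders $B_i\times\CC$ by these $n$ disjoint affine $w$-lines in the $(w,x)$-plane and invoke Buzzard--Forn\ae ss in place of the unspecified construction.
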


We first construct $\DD_\infty$ using an open set in the Cartan prolongation of a Kobayashi hyperbolic contact structure in $\CC^3$.
This will admit very few $\DD_\infty$-lines by construction.
Then we use a result, due to Buzzard and Forn\ae ss (theorem \ref{buzz}, for a proof see \cite{buzzard}) that allows one to control the set of points in $\CC^4$ which admit such horizontal lines.
A more careful analysis leads to
\begin{Thm}\label{mainthm3}
For every $R\in\RR\setminus\{0\}$ there exists an Engel structure $\DD_{R}$ for which the only  $\DD_{R}$-lines are tangent to the characteristic foliation $\WW_{R}$, and such that the set of points which admit such $\WW_{R}$-lines is exactly $\CC\times\{0,1,R\sqrt{-1}\}\times\CC^2\subset\CC^4_{(w,x,y,z)}$.
Moreover $\DD_{R}$ is isomorphic to $\DD_{R'}$ if and only if $R=R'$.
\end{Thm}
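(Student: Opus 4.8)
\emph{Construction and the line-count.} The plan is to obtain $\DD_R$ as a refinement of the structure $\DD_\infty$ of Theorem~\ref{mainthm2}. I start from the Cartan prolongation of a Kobayashi hyperbolic contact structure $\xi$ on an open subset of $\CC^3_{(x,y,z)}$, so that the characteristic foliation $\WW_R$ is the fibre direction of the prolongation and every $\DD_R$-line $f$ projects to a curve $\pi\circ f\colon\CC\to\CC^3$ tangent to $\xi$. Since $\xi$ admits no nonconstant entire curve tangent to it, $\pi\circ f$ is constant and $f$ is contained in a single $\WW_R$-leaf; this already yields the assertion that the only $\DD_R$-lines are tangent to $\WW_R$. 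To prescribe $L_R$ I apply Theorem~\ref{buzz} to arrange that a fibre of the prolongation is an entire $\WW_R$-line exactly over the three surfaces $\{x=0\}$, $\{x=1\}$ and $\{x=R\sqrt{-1}\}$, every other leaf being hyperbolic and omitting a value. Written in global coordinates this produces $\DD_R$ with $L_R=\CC\times\{0,1,R\sqrt{-1}\}\times\CC^2$, as required.

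\emph{Descent to the base.} The implication $R=R'\Rightarrow\DD_R\cong\DD_{R'}$ is trivial. For the converse let $\Phi\colon(\CC^4,\DD_R)\to(\CC^4,\DD_{R'})$ be an isomorphism. It preserves the flag $\WW\subset\DD\subset\EE$ and carries $\DD_R$-lines to $\DD_{R'}$-lines, so $\Phi(L_R)=L_{R'}$. Since the quotient of an even contact structure by its characteristic foliation is a canonically defined contact $3$-fold, and since $L_R,L_{R'}$ are unions of characteristic leaves, $\Phi$ descends to a biholomorphism $\psi$ of the base $\CC^3$ preserving $\xi$ and sending the three surfaces $\{x=c\}$, $c\in\{0,1,R\sqrt{-1}\}$, onto the surfaces $\{x=c'\}$, $c'\in\{0,1,R'\sqrt{-1}\}$.

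\emph{Reduction to an affine map and the configuration.} The next step is to show that $\psi$ respects the fibration by the surfaces $\{x=\mathrm{const}\}$, so that it induces a biholomorphism $\bar\psi$ of the $x$-line; being an automorphism of $\CC$, $\bar\psi$ is affine, $\bar\psi(x)=ax+b$, and it restricts to a bijection $\{0,1,R\sqrt{-1}\}\to\{0,1,R'\sqrt{-1}\}$. A direct inspection of the six possible bijections shows that, for $R,R'\in\RR\setminus\{0\}$, only two are realised by an affine map: the identity, which forces $R'=R$, and the scaling $x\mapsto R'\sqrt{-1}\,x$ subject to $RR'=-1$, which would give $R'=-1/R$.

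\emph{Main obstacle.} The technical heart of the argument is twofold: first, proving that $\psi$ is fibration-preserving (so that $\bar\psi$ exists at all), and second, excluding the spurious equivalence $R'=-1/R$. Although the configurations $\{0,1,R\sqrt{-1}\}$ and $\{0,1,-\sqrt{-1}/R\}$ are affinely equivalent, the offending scaling interchanges the leaf over $\{x=R\sqrt{-1}\}$ with one of the leaves over $\{x=0,1\}$, and I expect to rule it out by the rigidity of the chosen hyperbolic structure $\xi$ — either because the construction endows the leaf over $\{x=R\sqrt{-1}\}$ with a feature distinguishing it from those over $\{x=0,1\}$, or because $\xi$ fails to be invariant under this scaling, so that the candidate map does not lift to a contactomorphism intertwining the two sides. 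Once this is established, $\psi$ preserves the labelling, $\bar\psi$ fixes $0$ and $1$, hence $\bar\psi=\mathrm{id}$ and $R=R'$.
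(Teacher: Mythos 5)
Your construction is the right one: the paper's text literally sets $\DD_R=\Phi_R^*\DD_{st}$, but the claim that the only $\DD_R$-lines are $\WW_R$-tangent only holds for the prolongation structure $\DD_\infty$ of Theorem~\ref{mainthm2} (over the curve $\{w=0\}\subset\Omega_R$ the standard structure admits the non-characteristic line $\zeta\mapsto(0,\zeta,y_0+z_0\zeta,z_0)$), so your choice is the intended one. The genuine divergence, and your first gap, is in the uniqueness argument. You want to descend $\Phi$ to a biholomorphism $\psi$ of a quotient contact $3$-fold and then show $\psi$ preserves the fibration $\{x=\mathrm{const}\}$; neither step is available. The leaf space of $\WW_R$ on $\Omega_R\times\CC^2$ need not be $\CC^3$ (the slices $\{w:(w,x_0)\in\Omega_R\}$ may be disconnected, so the fibres of $(w,x,y,z)\mapsto(x,y,z)$ can consist of several leaves), and nothing distinguishes the hypersurfaces $\{x=c\}$ for $c\notin\{0,1,R\sqrt{-1}\}$, so there is no reason for a fibration-preserving $\bar\psi$ to exist. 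The paper replaces all of this with one explicit transversal curve: the configuration fed to Theorem~\ref{buzz} is $C_R=\bigl(\CC\times\{0,1,R\sqrt{-1}\}\bigr)\cup\bigl(\{0\}\times\CC\bigr)$, the second component guaranteeing that $\iota(\zeta)=(0,\zeta,0,0)$ lies in $\Omega_R\times\CC^2$. Then $h=\pi'\circ\Phi\circ\iota$ is an entire function with $h^{-1}\{0,1,R'\sqrt{-1}\}=\{0,1,R\sqrt{-1}\}$, bijective between the triples; the big Picard theorem rules out an essential singularity at infinity, and the single-point fibres force $h$ to be a degree-one polynomial. This bypasses both of your descent problems and is the step you should adopt (adding $\{0\}\times\CC$ to your configuration so that $\iota$ exists).

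Your final case analysis, however, is correct and exposes a point where the paper itself is too quick. The paper ends with ``$h$ linear and $h(\{0,1,R\sqrt{-1}\})=\{0,1,R'\sqrt{-1}\}$ is impossible for $R\neq R'$'', but, exactly as you observe, $h(x)=R'\sqrt{-1}\,x$ realises the bijection $0\mapsto 0$, $1\mapsto R'\sqrt{-1}$, $R\sqrt{-1}\mapsto 1$ whenever $RR'=-1$. So the pair $(R,-1/R)$ is excluded neither by your argument nor by the paper's, and your proposed repairs are only wishes (``I expect to rule it out by the rigidity of $\xi$''); note that the second of them is unlikely to work as stated, since the foliation-preserving linear map $(w,x)\mapsto(w,R'\sqrt{-1}\,x)$ carries the configuration $C_R$ onto $C_{-1/R}$, so the two configurations are not intrinsically distinguishable and any fix must use finer information about the specific Buzzard--Forn\ae ss domains. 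Concretely, either weaken the conclusion to ``$\DD_R\cong\DD_{R'}$ implies $R'\in\{R,-1/R\}$'' (which still yields uncountably many pairwise non-isomorphic structures, the consequence advertised in the abstract), or replace $\{0,1,R\sqrt{-1}\}$ by a three-point set whose only affine identification with another member of the family is the identity.
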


%
%

\section{Hyperbolicity and holomorphic Engel structures}
For the proof of theorem \ref{mainthm} we will need the notion of hyperbolicity on a complex directed manifold.
Recall that the Kobayashi pseudo-distance $d_M$ on a complex manifold $M$ may be written in terms of the Finsler pseudo-metric
\begin{equation}\label{finsler}
F(v_p) = \inf\left\lbrace \frac{1}{|\lambda|}:\exists\text{ a holomorphic }f:D\to M\text{ s.t. }f(0)=p,\,f'(0)=\lambda v \right\rbrace,
\end{equation}
by integration. Explicitly, 
\begin{equation}\label{integrated}
d(p,q) = \inf\left\lbrace \int_0^1 F(\gamma'(t))dt:\gamma \text{ piecewise smooth, }\gamma(0)=p\text{ and }\gamma(1)=q\right\rbrace.
\end{equation}
Given a holomorphic subbundle $\HH \subset TM$, a disc $D\to M$ is called \emph{horizontal} if it is tangent to $\HH$.
The Finsler pseudo-metric $F_\HH$ directed by $\HH$ is defined by requiring that the infimum in (\ref{finsler}) be taken only over horizontal discs. Likewise, the Kobayashi pseudo-distance $d_\HH$ on the directed manifold $(M,\HH)$ is defined by requiring that the infimum in (\ref{integrated}) be taken only over paths $\gamma$ that are tangent to $\HH$.
This is finite because, by Chow's theorem, these paths always exist if the distribution is bracket generating. This is the case, by definition, for an Engel structure.
The directed manifold $(M,\HH)$ is said to be \emph{Kobayashi hyperbolic} if $d_\HH$ is a genuine distance. 
Note that if $(M,\HH)$ is Kobayashi hyperbolic, there can be no $\HH$-line.

\begin{Rem}
Notice that the standard Engel structure is not hyperbolic, since it admits many horizontal lines $f:\CC\to\CC^4$.
For instance, one can take the leaves of the characteristic foliation $\WW$ of $\DD_{st}$.
In fact, given a point $p=(w_0,x_0,y_0,z_0)\in \CC^4$ and a vector $v=(v_w,v_x,v_y,v_z)\in\DD_p$ (hence $v_z=w_0 v_x$ and $v_w = z_0 v_x$)
the map
\[
f(\zeta)=\left(w_0+v_w\zeta,x_0+v_x\zeta, y_0+v_y\zeta+v_x v_z \frac{\zeta^2}{2} + v_x^2 v_w \frac{\zeta^3}{6},z_0+v_z\zeta + v_x v_w \frac{\zeta^2}{2}\right)
\]
is a horizontal line with $f(0)=p$ and $f'(0)=v$.
\end{Rem}

The idea for proving theorem \ref{mainthm} is to construct certain (directed) hyperbolic subsets of $\C^4$ and look for biholomorphic copies of $\CC^4$ inside these domains.

\begin{Def} A Fatou-Bieberbach domain is a proper subset $\Omega\subset \CC^n$ such that $\Omega$ is biholomorphic to $\CC^n$.
\end{Def}

Following \cite{forstneric} we let $\{c_n\}_{n\in\mathbb{N}}$, $\{d_n\}_{n\in\mathbb{N}}$  and $\{e_n\}_{n\in\mathbb{N}}$ be positive diverging monotonic sequences.
Denote with $D_y$ (resp. $D_z$) the unit disc in the $y$ (resp. $z$) direction, with $\partial D^2_{(w,x)}$ the boundary of the unit polydisc in the $(w,x)$-plane in $\CC^4$ and with $\partial D^3_{(w,x,z)}$ the boundary of the unit polydisc in the $(w,x,z)$-plane in $\CC^4$.
Let
\begin{equation}\label{defA} 
A=\bigcup_{i=1}^\infty 2^{i-1}\partial D^3_{(w,x,z)}\times c_i \xoverline{D}_y.
\end{equation}
\begin{equation}\label{defB} 
B=\bigcup_{i=1}^\infty 2^{i-1}\partial D^2_{(w,x)}\times d_i \xoverline{D}_y\times e_i\xoverline{D}_z.
\end{equation}

By a direct adaptation of lemma 2.1 in \cite{forstneric}, we can prove the following:

\begin{Lem}\label{tlem}
Assume $d_n\geq 2^{5n+2}$ and $e_n\geq 2^{3n+1}$ for every $n\in \mathbb{N}$. Let $N_0\in \mathbb{N}$ and $f:D\to \CC^4\setminus B$ be a $\DD_{st}$-horizontal embedding of a disc with $f(0)\in 2^{N_0} D^4$. Then we have the estimates
\[ 
|w'(0)|<2^{N_0+1},\,\,|x'(0)|<2^{N_0+1},\,\,|y'(0)|<2^{3N_0+2},\,\,|z'(0)|<2^{2N_0+1}.
\]
\end{Lem}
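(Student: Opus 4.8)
The plan is to reduce everything to a single estimate on the base projection $g=(w,x)\colon D\to\CC^2$ and then to recover the remaining bounds from the horizontality relations. Since $f$ is tangent to $\DD_{st}=\ker(dy-z\,dx)\cap\ker(dz-w\,dx)$, its components satisfy $z'(\zeta)=w(\zeta)x'(\zeta)$ and $y'(\zeta)=z(\zeta)x'(\zeta)$ for every $\zeta\in D$. The key claim to establish is that the base stays inside the $(N_0{+}1)$-st wall, i.e.\ $\max(|w(\zeta)|,|x(\zeta)|)<2^{N_0}$ for all $\zeta\in D$. Granting this, the Cauchy estimate at the origin immediately gives $|w'(0)|,|x'(0)|\le 2^{N_0}<2^{N_0+1}$, and then the horizontality relations evaluated at $0$ yield $|z'(0)|=|w(0)|\,|x'(0)|<2^{N_0}\cdot 2^{N_0+1}=2^{2N_0+1}$ and $|y'(0)|=|z(0)|\,|x'(0)|<2^{2N_0+1}\le 2^{3N_0+2}$, using $|w(0)|,|z(0)|<2^{N_0}$ from $f(0)\in 2^{N_0}D^4$. (The stated bound on $y'(0)$ is thus not sharp; one could equally read it off the sup-estimate for $|y|$ produced below.)

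To prove the base claim I would argue by contradiction using the way $f$ avoids $B$. Set $G=\{\zeta\in D:\max(|w|,|x|)<2^{N_0}\}$; it is open and contains $0$. If $G\ne D$, then by connectedness of $D$ there is an interior point $\zeta_*\in D$ with $\max(|w(\zeta_*)|,|x(\zeta_*)|)=2^{N_0}=2^{(N_0+1)-1}$, so $f(\zeta_*)$ lies over the $(N_0{+}1)$-st wall $2^{N_0}\partial D^2_{(w,x)}$. Because $f(\zeta_*)\notin B$, this forces $|y(\zeta_*)|>d_{N_0+1}\ge 2^{5N_0+7}$ or $|z(\zeta_*)|>e_{N_0+1}\ge 2^{3N_0+4}$. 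The goal is then to contradict both alternatives by an a priori bound on the fibre coordinates on $\overline G$.

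The a priori bound comes from integrating the horizontality relations. On $G$ one has $|w|,|x|\le 2^{N_0}$; feeding a Cauchy estimate for $x'$ (controlled by $\sup|x|\le 2^{N_0}$) into $z'=wx'$ and integrating along a path in $\overline G$ from $0$ to $\zeta_*$ produces $|z(\zeta_*)|\lesssim 2^{N_0}+C\,2^{2N_0}$, and then $y'=zx'$ gives $|y(\zeta_*)|\lesssim 2^{N_0}+C'\,2^{3N_0}$, for universal constants $C,C'$. The precise hypotheses $d_n\ge 2^{5n+2}$ and $e_n\ge 2^{3n+1}$ are exactly calibrated so that these grow strictly slower than the wall caps: the exponent gap $3N_0\!\to\!5N_0$ (resp.\ $2N_0\!\to\!3N_0$) dominates, while the additive offsets $+2$ and $+1$ absorb the universal constants coming from the Cauchy estimates even at small $N_0$. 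Hence $|y(\zeta_*)|<d_{N_0+1}$ and $|z(\zeta_*)|<e_{N_0+1}$, contradicting the previous paragraph; so $G=D$ and the base claim holds.

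The main obstacle is the a priori fibre estimate of the last paragraph: turning the pointwise identities $z'=wx'$ and $y'=zx'$ into genuine sup-bounds for $|y|,|z|$ requires controlling $\int|x'|$ along paths, which is not bounded by $\sup|x|$ alone because $x$ may oscillate, and the control degrades as the path approaches $\partial D$ (where $G$ may meet $\partial D$). The way around it is Forstner\v{i}\v{c}'s shell-by-shell bookkeeping: one runs the comparison wall by wall on the relatively compact sub-discs $D_r$, $r\uparrow 1$, so that each Cauchy estimate is applied on a region bounded away from $\partial D$, and the slack built into the exponents of $d_n,e_n$ survives the limit $r\to1$. Everything else is the routine Cauchy-estimate bookkeeping that the statement's explicit constants encode.
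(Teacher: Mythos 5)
Your overall strategy (use the avoidance of $B$ to trap the base curve $(w,x)$ inside a polydisc, then apply the Cauchy estimate and the horizontality relations at $\zeta=0$) is the same as the paper's, but the central step is not actually proved, and the form in which you state it is stronger than what the method can deliver. You claim $\max(|w(\zeta)|,|x(\zeta)|)<2^{N_0}$ for \emph{all} $\zeta\in D$. The paper only obtains $(w(\zeta),x(\zeta))\in 2^{N_0}D^2$ for $|\zeta|\le 1/2$, and this restriction is essential: near $\partial D$ there is no a priori control on $|x'|$, so the fibre coordinates $y,z$ (obtained by integrating $zx'$ and $wx'$) may grow past the caps $d_{N_0+1},e_{N_0+1}$ and the base may legitimately escape through higher walls without meeting $B$. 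Your own last paragraph concedes the decisive difficulty --- to bound $|y(\zeta_*)|$ and $|z(\zeta_*)|$ you must bound $\int|x'|$ along a path to $\zeta_*$, and the Cauchy estimate for $x'$ at a point $\zeta$ is governed by $\sup|x|$ on a full disc around $\zeta$ together with the distance of $\zeta$ to $\partial D$, neither of which is controlled by the hypothesis $\zeta\in\overline{G}$ --- but the proposed repair (``shell-by-shell bookkeeping on $D_r$, $r\uparrow 1$'') is precisely the paper's downward induction, and that induction does not prove your claim: it proves containment only on a subdisc of radius $\ge 1/2$.

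Concretely, the paper first rescales so that $f$ is holomorphic on $\overline{D}$, picks $N$ with $|w|,|x|<2^N$ on $\overline{D}$, and uses the Cauchy estimate on the circle of radius $1-2^{-N}$ to get $|x'|<2^{2N}$ there; integrating the horizontality relations then gives $|z|<2^{3N+1}\le e_N$ and $|y|<2^{5N+2}\le d_N$ on that subdisc, so avoidance of the $N$-th shell of $B$ plus connectedness traps $(w,x)$ in $2^{N-1}D^2$ there. Iterating downward costs a radius $2^{-k}$ at step $k$, and since $\sum_{k>N_0}2^{-k}\le 2^{-N_0}\le 1/2$ one ends with $(w,x)\in 2^{N_0}D^2$ on $|\zeta|\le 1/2$ only. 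The Cauchy estimate at the origin therefore carries a factor $2$, giving $|x'(0)|,|w'(0)|\le 2^{N_0+1}$ --- not the $2^{N_0}$ you assert --- after which $|z'(0)|=|w(0)||x'(0)|\le 2^{2N_0+1}$ and $|y'(0)|=|z(0)||x'(0)|\le 2^{2N_0+1}\le 2^{3N_0+2}$ as required. So the lemma's constants do come out once the containment is stated on the half-disc, but as written your proof rests on an unestablished (and unneeded) full-disc containment, and the a priori fibre estimate that is supposed to force it is exactly the point left unproved.
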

\begin{proof}
We may assume without loss of generality that $f$ is holomorphic on $\xoverline D$ (replace $f$ by $\zeta\mapsto f(r\zeta)$ for some $r<1$). This gives $N\in \mathbb{N}$ such that $|x(\zeta)|<2^N$ and
$|w(\zeta)|< 2^N$ for all $\zeta\in \xoverline{D}$. 
The Cauchy integral formula for a circle centered at $\zeta=0$ of ray $r=1-2^{-N}$ gives
\[
|x'(\zeta)|<2^{2N}\quad\text{and}\quad|w(\zeta)x'(\zeta)|<2^{3N}
\]
for $|\zeta|\leq r$.
Since $f$ is horizontal, we have the conditions
\begin{equation}\label{sol}
y'(\zeta)=z(\zeta)x'(\zeta)\quad\text{and}\quad z'(\zeta)=w(\zeta)x'(\zeta)
\end{equation}
which in turn give
\begin{align*}
|z(\zeta)|&\leq |z(0)|+\left| \int_0^\zeta w dx \right| <2^{N_0}+2^{3N}<2^{3N+1}\leq d_N\\
|y(\zeta)|&\leq |y(0)|+\left| \int_0^\zeta z dx \right| <2^{N_0}+2^{5N+1}<2^{5N+2}\leq c_N
\end{align*}
for $|\zeta|\leq r$. 
From these estimates, the definition of $B$, and the fact that $f(D)$ does not intersect $B$, it follows that $(w(\zeta),x(\zeta))$ does not intersect $2^{N-1}\partial D^2$ for $|\zeta|\leq r$. Since $2^{N-1}\partial D^2$ disconnects $2^N D^2$ and $(w(0),x(0))\in 2^{N_0}D^2\subset 2^{N-1}D^2$, we conclude that
\[
(w(\zeta),x(\zeta))\in 2^{N-1} D^2\quad\text{for}\quad|\zeta|\leq 1-2^{-N}.
\]
If $N-1>N_0$, we can repeat the same argument to get
\[
(w(\zeta),x(\zeta))\in 2^{N-2} D^2\quad\text{for}\quad|\zeta|\leq 1-2^{-N}-2^{-(N-1)},
\]
and after finitely many repetitions
\[
(w(\zeta),x(\zeta))\in 2^{N_0} D^2\quad\text{for}\quad|\zeta|\leq 1-2^{-N}-\ldots-2^{-(N_0+1)}\leq \frac{1}{2}.
\]
Applying the Cauchy estimate now gives $|x'(0)|\leq 2^{N_0+1}$ and $|w'(0)|\leq 2^{N_0+1}$, while using equation (\ref{sol}) we get
\[
|z'(0)|=|w(0)x'(0)|\leq 2^{2N_0+1}\quad\text{and}\quad|y'(0)|=|z(0)x'(0)|\leq 2^{3N_0+2},
\]
completing the proof of the lemma.
\end{proof}

The following lemma has a completely analogous proof.
\begin{Lem}\label{tlem2}
Assume $c_n\geq 2^{3n+1}$ for every $n\in \mathbb{N}$. Let $N_0\in \mathbb{N}$ and $f:D\to \CC^4\setminus A$ be a $\DD_{st}$-horizontal embedding of a disc with $f(0)\in 2^{N_0} D^4$. Then we have the estimates
\[ 
|w'(0)|<2^{N_0+1},\,\,|x'(0)|<2^{N_0+1},\,\,|y'(0)|<2^{2N_0+1},\,\,|z'(0)|<2^{N_0+1}.
\]
\end{Lem}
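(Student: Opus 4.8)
The plan is to run the proof of Lemma \ref{tlem} almost verbatim, changing only which coordinates get confined, since $A$ is built from polydisc boundaries in the $(w,x,z)$-directions rather than the $(w,x)$-directions. Accordingly, I will confine the triple $(w(\zeta),x(\zeta),z(\zeta))$ inside shrinking polydiscs $2^kD^3$, treating $y$ as the single ``free'' coordinate whose size is controlled through the horizontality relation $y'=zx'$.

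First I reduce to $f$ holomorphic on the closed disc $\xoverline D$ by precomposing with $\zeta\mapsto r\zeta$, $r<1$, and pick $N\in\N$ with $|w(\zeta)|,|x(\zeta)|,|z(\zeta)|<2^N$ on $\xoverline D$, which exists by compactness. The Cauchy estimate on the circle of radius $1-2^{-N}$ gives $|x'(\zeta)|<2^{2N}$ there. Here is the one genuine difference from Lemma \ref{tlem}: since $z$ is now a confined coordinate, I may use the sup bound $|z|<2^N$ directly, so $y'=zx'$ yields $|y'|<2^{3N}$ and, after integration, $|y(\zeta)|<2^{N_0}+2^{3N}<2^{3N+1}\le c_N$. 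This is exactly why the single hypothesis $c_n\ge 2^{3n+1}$ suffices, whereas in Lemma \ref{tlem} the coordinate $z$ itself had to be estimated by integration, which inflated the bound on $y$ and forced two separate hypotheses.

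Because $f(D)\cap A=\varnothing$ and $|y(\zeta)|<c_N$, the curve $(w(\zeta),x(\zeta),z(\zeta))$ cannot meet $2^{N-1}\partial D^3$. Since $2^{N-1}\partial D^3$ disconnects $2^ND^3$ and the initial point $(w(0),x(0),z(0))$ lies in $2^{N_0}D^3\subset 2^{N-1}D^3$, the curve stays in $2^{N-1}D^3$ for $|\zeta|\le 1-2^{-N}$. Iterating the confinement exactly as in Lemma \ref{tlem} places the triple in $2^{N_0}D^3$ for $|\zeta|\le\tfrac12$.

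Finally, the Cauchy estimate on the disc of radius $\tfrac12$ gives $|w'(0)|,|x'(0)|,|z'(0)|\le 2^{N_0+1}$ directly; note that $z'(0)$ now earns the sharper bound because $z$ has been confined, in contrast to Lemma \ref{tlem}, where one instead invoked $z'=wx'$. The remaining estimate $|y'(0)|<2^{2N_0+1}$ follows from $y'(0)=z(0)x'(0)$ together with $|z(0)|<2^{N_0}$ and $|x'(0)|<2^{N_0+1}$. I expect the only point needing any care to be the disconnection argument for $2^{N-1}\partial D^3$ in $\CC^3$ together with the bookkeeping of the iteration, both of which run in complete parallel to the two-dimensional case treated above.
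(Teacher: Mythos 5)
Your proof is correct and is precisely the ``completely analogous'' argument the paper alludes to without writing out: confine $(w,x,z)$ in shrinking polydiscs via the separation property of $2^{N-1}\partial D^3$, and control $y$ through $y'=zx'$. You also correctly pinpoint the one genuine difference — $z$ is now bounded by the sup estimate rather than by integrating $z'=wx'$, which is exactly why the single hypothesis $c_n\ge 2^{3n+1}$ suffices and why $z'(0)$ gets the sharper bound $2^{N_0+1}$.
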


The following theorem was proved by Forstneri\v{c} in \cite{forstneric}.
\begin{Thm}[Forstneri\v{c}]\label{fosThm}
Let $0<a_1<b_1<a_2<b_2<\ldots$ and $c_i>0$ be sequences of real numbers such that $\lim_{n\to\infty}a_n=\lim_{n\to\infty}b_n=+\infty$.
Let $n>1$ be an integer and
\begin{equation}\label{defK}
 K=\bigcup_{i=1}^\infty \left(b_i\xoverline{D}^{n-1}\setminus a_iD^{n-1}\right)\times c_i \xoverline D\subset \CC^n.
\end{equation}
Then there exists a Fatou-Bieberbach domain $\Omega\subset\CC^n\setminus K$.
\end{Thm}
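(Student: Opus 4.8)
The plan is to realise $\Omega$ as an increasing union of biholomorphic copies of balls, produced by an inductive sequence of holomorphic automorphisms of $\CC^n$, each designed to enlarge the current copy while keeping it disjoint from $K$. Write $K=\bigcup_i K_i$ with $K_i=(b_i\xoverline D^{n-1}\setminus a_iD^{n-1})\times c_i\xoverline D$; the essential geometric feature is that $K_i$ has a \emph{hole}, namely $a_iD^{n-1}\times c_i\xoverline D$, so that a domain can pass from $\{\|z'\|<a_i\}$ to $\{\|z'\|>b_i\}$ through this hole without meeting $K_i$, and that the shells escape to infinity ($a_i,b_i\to\infty$), so that only finitely many of them obstruct any bounded region. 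First I would fix an exhaustion of $\CC^n$ by balls $B_1\subset B_2\subset\cdots$ and then construct automorphisms $\phi_k$, setting $\Phi_k=\phi_k\circ\cdots\circ\phi_1$, so that the sets $\Omega_k:=\Phi_k^{-1}(B_k)$ are nested, avoid $K$, and exhaust $\Omega:=\bigcup_k\Omega_k$.

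The heart of the argument is the inductive step. Suppose $\xoverline{\Omega}_{k-1}$ has been placed disjoint from $K$ and $\Phi_{k-1}$ maps it into $B_{k-1}$. I would choose $\phi_k\in\mathrm{Aut}(\CC^n)$ with two competing properties: it must be so close to the identity on $B_{k-1}$ that the partial compositions converge uniformly on compacts (take $\sup_{B_{k-1}}|\phi_k-\mathrm{id}|<\varepsilon_k$ with $\sum_k\varepsilon_k<\infty$), and simultaneously it must expand the pulled-back ball $\Phi_k^{-1}(B_k)$ enough to contain $\xoverline{\Omega}_{k-1}$ while still threading around the next shell $K_k$. Concretely one realises the required enlargement by an isotopy of injective holomorphic maps that pushes the growing domain through the hole of $K_k$ (or above it in the $z_n$-direction, where $|z_n|>c_k$), keeps it away from $K_k$, and is the identity near $\xoverline{\Omega}_{k-1}$; by the Andersén--Lempert theorem this isotopy is approximated, uniformly on the relevant polynomially convex compact, by a global automorphism $\phi_k$. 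Since $\varepsilon_k$ is small, the finitely many inner shells $K_1,\dots,K_{k-1}$ that were already avoided remain avoided, $K_k$ is avoided by design, and shells $K_j$ with $j>k$ do not meet $B_k$ yet and are dealt with at later stages.

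Finally I would check convergence and conclude. With the corrections $\varepsilon_k$ summable along the exhaustion, the compositions $\Phi_k$ converge uniformly on compacts to an injective holomorphic map, and the inverses $\Phi_k^{-1}$ assemble, on the exhausting balls $B_k$, into a biholomorphism of $\CC^n$ onto $\Omega=\bigcup_k\Phi_k^{-1}(B_k)$; thus $\Omega$ is a Fatou--Bieberbach domain, and it is proper since $\Omega\cap K=\emptyset$ and $K\neq\emptyset$. The main obstacle is precisely the tension in the inductive step between ``grow past the shell'' and ``do not disturb the part of the domain already built'': a naive shear $z_n\mapsto z_n+g(z')$ that is large on the annulus $a_k\le\|z'\|\le b_k$ cannot be small on points of $\Omega_{k-1}$ lying above the shell (it depends only on $z'$), and the shell $K_k$ is not polynomially convex, so one cannot simply ``move $K_k$ away''. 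The resolution is to move the \emph{domain} rather than $K$, threading it through the hole, and to invoke the full flexibility of $\mathrm{Aut}(\CC^n)$ via Andersén--Lempert rather than elementary shears; arranging the ordering of radii together with the smallness parameters $\varepsilon_k$ so that all these requirements hold simultaneously is the delicate bookkeeping at the core of Forstneri\v{c}'s proof.
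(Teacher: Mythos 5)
A preliminary remark: the paper offers no proof of this statement. Theorem \ref{fosThm} is imported verbatim from Forstneri\v{c}'s paper \cite{forstneric} and used as a black box, so there is no in-paper argument to compare yours against. On its own merits, your sketch follows the standard push-out construction of Fatou--Bieberbach domains --- an increasing union of preimages of balls under compositions of automorphisms, each automorphism obtained by Anders\'en--Lempert approximation of an isotopy of injective holomorphic maps, with summable errors guaranteeing convergence on compacts and surjectivity of the limit --- and this is essentially the strategy behind Forstneri\v{c}'s actual proof. The key structural features you isolate (each shell $K_i$ is compact, the shells are pairwise separated and escape to infinity so only finitely many obstruct a bounded region, and the domain must be grown past one shell at a time without disturbing what has already been built) are the right ones.

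Two caveats. First, the sentence claiming that a domain can ``pass from $\{\|z'\|<a_i\}$ to $\{\|z'\|>b_i\}$ through the hole $a_iD^{n-1}\times c_i\xoverline{D}$'' is geometrically off: that set is the \emph{inside} of the shell, not a passage to the outside, and since the annulus $b_i\xoverline{D}^{n-1}\setminus a_iD^{n-1}$ separates inside from outside in the $z'$-variable, the only escape route is the one you mention parenthetically, namely over the top where $|z_n|>c_i$ --- this is exactly why the boundedness of the last factor $c_i\xoverline{D}$ is essential to the statement. Second, the genuinely hard step is asserted rather than carried out: one must exhibit the isotopy explicitly and verify at each stage that the relevant compact set (the closure of the current domain together with the finitely many shells it must continue to avoid) is polynomially convex, so that Anders\'en--Lempert applies and the approximation error does not reintroduce intersections with $K_1,\dots,K_k$. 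As a sketch of a cited external theorem this is a fair summary, but that verification is where all the work in \cite{forstneric} lies.
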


%
%
\section{Proof of theorem \ref{mainthm}}

In what follows fix $0<\varepsilon<1$ and consider the real sequences 
\[
a_i=2^{i-1}-\varepsilon, \,\,b_i=2^{i-1}+\varepsilon.
\]

To construct $\DD_\EE$ we fix $c_i=2^{3i+1}$ and let $A$ be the set determined by $c_i$ according to \eqref{defA}.
Lemma \ref{tlem2} ensures that $(\CC^4\setminus A,\EE_{st})$ is hyperbolic, moreover theorem \ref{fosThm} gives a Fatou-Bieberbach map $\Phi:\C^4\to\Omega\subset\CC^4\setminus A$.
We set $\DD_\EE:=\Phi^\ast\DD_{st}$ so that its associated even contact structure is $\Phi^\ast\EE_{st}$.
Lemma \ref{tlem2} furnishes a lower bound for the Finsler metric, whence it follows that the $\Phi^\ast\EE_{st}$-directed Kobayashi pseudo-distance on $\Omega$ is a genuine distance, i.e. the restriction of the standard even contact structure to $\Omega$ is hyperbolic.

To construct $\DD_\DD$ we fix $d_i=2^{5i+2}$ and $e_i=2^{3i+1}$ and let $K$ be the set determined by $n=3, a_i, b_i$ and $c_i=d_i$ according to \eqref{defK}.
Let $B$ be the set determined by $d_i$ and $e_i$ according to \eqref{defB}, and notice that $B\subset K\times \CC$.
By theorem \ref{fosThm} there exists a Fatou-Bieberbach domain $\Omega\subset \CC^3$ with $\Omega \cap K=\emptyset$. Define $\Xi=\Omega\times \CC$. The subset $\Xi\subset\CC^4$ is a Fatou-Bieberbach domain in $\CC^4$ which fulfills $\Xi\cap \left(K\times \CC\right)=\emptyset$; in particular, $\Xi\cap B=\emptyset$. 
Let $\Phi:\C^4\to\Xi$ be the Fatou-Bieberbach map. 
We define $\DD_\DD=\Phi^\ast(\DD_{st})$.
Lemma \ref{tlem} furnishes a lower bound for the Finsler metric, whence it follows that the $\DD_{st}$-directed Kobayashi pseudo-distance on $\Xi$ is a genuine distance, i.e. the restriction of the standard Engel structure to $\Xi$ is hyperbolic.
Notice that in this construction the associated even contact structure $\sE$ is not hyperbolic.
Indeed we have many $\sE_{st}$-lines $f:\CC\to\Xi$ of the form
\[
f(\zeta)=(w_0,x_0,y_0,\zeta)
\]
where $(w_0,x_0,y_0)$ is not contained in $A$, which can be pulled-back.

To construct $\DD_\WW$ consider the set
\[
K=\bigcup_{i=1}^\infty2^{i-1}\partial  D_{(w,y)}^2\times 2^i \xoverline{D}_z
\]
contained in the $(w,y,z)$-plane in $\CC^4$.
All $\WW$-horizontal holomorphic copies of $\CC$ are of the form $f(\zeta)=(w(\zeta),x_0,y_0,z_0)$ for some $w$ holomorphic and hence they will intersect $K$ for some $\zeta$.
Indeed if $N_0\in\N$ is such that $|z_0|<d_{N_0}$ then $f$ does not intersect $K$ only if $|w(\zeta)|<2^{N_0-1}$ for all $\zeta\in\C$, which is not true.
Theorem \ref{fosThm} ensures the existence of a Fatou-Bieberbach map $\tilde\Phi:\C^3\to\Omega\subset\C^3\setminus K$ so that also $\Phi=\tilde\Phi\times id:\CC^4\to\Omega\times\C\subset\C^4$ is a Fatou-Bieberbach map.
By the above discussion there are no copies of $\CC$ tangent to the characteristic foliation of the standard Engel structure restricted to $\Omega$.
We then define $\DD_\WW:=\Phi^*\DD_{st}$, this structure does not have lines tangent to the characteristic foliation, nevertheless $\CC^4$ is not $\DD_\WW$-hyperbolic, since the pull-back of the $\DD_st$-line 
\[
f:\C\hookrightarrow\C^4\qquad f(\zeta)=(0,\zeta,0,0).
\]
is a $\DD_\WW$-line.

%
%

\section{Construction of the infinite families}\label{example}
In this section we will prove theorems \ref{mainthm2} and \ref{mainthm3}.

\subsection{Proof of theorem \ref{mainthm2}}

We use Forstneri\v{c}'s hyperbolic contact structure on $\CC^3$, which is the pull-back $\alpha=\Phi^*\alpha_{st}$ of the restriction of the standard contact structure on a hyperbolic Fatou-Bieberbach domain in $\CC^3\setminus K$ (see \cite{forstneric}).
Consider the Cartan prolongation $M=\PP(\xi_h)$ of $\xi_h=\ker\alpha$ with its Engel structure $\DD(\xi_h)$. Since $\ker\alpha_{st}$ is trivial as a holomorphic bundle, $M$ is biholomorphic to $\CC^3\times\CC\PP^1$.
Given $p\in\CC\PP^1$, consider in $M$ the open set $\CC^4=\CC^3\times\CC\PP^1\setminus(\CC^3\times\{p\})$ and the restriction of the Engel structure $\DD_\infty=\left.\DD(\xi_h)\right|_{\CC^4}$.
We claim that this structure has the properties stated in theorem \ref{mainthm2}.

Indeed suppose that $f:\C\to\CC^4$ is a $\DD_\infty$-line. 
Then if we denote by $\pi:M\to\CC^3$ the canonical projection of the projectivisation, the composition $\pi\circ f$ is tangent to $\xi_h$ in $\CC^3$.
Since $(\CC^3,\xi_h)$ is hyperbolic, $\pi\circ f$ must be constant, so $f$ is tangent to the fibers. This proves that the only $\DD_\infty$-lines are tangent to the characteristic foliation $\WW_\infty$.

Fix $n\in\N$. 
In order to construct $\DD_n$, we use the following result
\begin{Thm}[Buzzard and Forn\ae ss, \cite{buzzard}]\label{buzz}
 Let $L$ be a closed, $1$-dimensional, complex subvariety of $\CC^2$, and $B_0$ a ball with $\xoverline B_0\cap L=\emptyset$.
 Then there exists a Fatou-Bieberbach domain $\Omega\subset \CC^2\setminus\xoverline B_0$ with $L\subset \Omega$ and a biholomorphic map $\Psi$ from $\Omega$ onto $\CC^2$ such that $\CC^2\setminus \Psi(L)$ is Kobayashi hyperbolic.
 Moreover, all nonconstant images of $\CC$ in $\CC^2$ intersect $\Psi(L)$ in infinitely many points.
\end{Thm}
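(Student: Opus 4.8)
The plan is to split the statement into a \emph{soft} Fatou--Bieberbach part and a \emph{hard} value-distribution part. For the soft part I would produce the inverse map $\Psi^{-1}:\CC^2\to\Omega$ as a limit of a sequence of shears of $\CC^2$ (Anders\'en--Lempert theory), chosen so that its basin $\Omega$ contains $L$, is disjoint from $\xoverline{B}_0$, and is biholomorphic to $\CC^2$. This is the same mechanism that yields the Fatou--Bieberbach domains of Theorem \ref{fosThm}, only now steered by the prescribed curve $L$ instead of by the shells $K$. On its own this gives a biholomorphism $\Psi:\Omega\to\CC^2$ with $L\subset\Omega\subset\CC^2\setminus\xoverline{B}_0$; the content of the theorem lies entirely in controlling the image $\Psi(L)$.

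The heart of the matter is to make $\CC^2\setminus\Psi(L)$ Kobayashi hyperbolic with the infinite-intersection property. I would first fix, once and for all, a \emph{blocking configuration}: a countable union $S=\bigcup_m S_m$ of small analytic pieces, with $S_m$ spread densely over the sphere $\{|p|=\rho_m\}\subset\CC^2$ for a sequence $\rho_m\to\infty$, arranged so that a holomorphic disc cannot cross the $m$-th shell while avoiding $S_m$. The decisive step is then to choose the shears so that the single connected curve $\Psi(L)$ passes through, or uniformly approximates, the whole of $S$: at stage $m$ one perturbs the partially built automorphism by a shear dragging a piece of $L$ across the $m$-th shell through $S_m$, using Runge/Anders\'en--Lempert approximation on a compact set to keep the error below $2^{-m}$, so that the infinite composition still converges to an injective limit.

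Granting such a $\Psi$, both conclusions flow from the blocking estimate, exactly as Kobayashi hyperbolicity was derived from the Cauchy lemmas \ref{tlem} and \ref{tlem2}. Let $g:D\to\CC^2\setminus\Psi(L)$ be a holomorphic disc with $|g(0)|<\rho_m$. Since $g$ avoids $S\subset\Psi(L)$ it cannot cross any shell of radius $\ge\rho_m$ through the densely placed pieces $S_k$, so its image is trapped in a bounded region, and a Cauchy estimate then bounds $|g'(0)|$ by a constant depending only on $m$. This is a lower bound for the Kobayashi--Finsler metric of $\CC^2\setminus\Psi(L)$, and integrating it shows the directed pseudo-distance is a genuine distance, i.e.\ the complement is Kobayashi hyperbolic. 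The same estimate yields the infinite-intersection statement: a nonconstant entire $g:\CC\to\CC^2$ has unbounded image by Liouville, so if it met $\Psi(L)$ only finitely often, then outside a large disc its image would avoid $S$ yet remain unbounded, forcing it to cross shells through the $S_k$, a contradiction. Hence every nonconstant entire curve meets $\Psi(L)$ infinitely often.

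The main obstacle I anticipate is the realization step: threading one connected embedded copy of the \emph{fixed} variety $L$ through the entire infinite configuration $S$ while simultaneously keeping $\Omega$ disjoint from $\xoverline{B}_0$, keeping $L\subset\Omega$, and preserving convergence of the Fatou--Bieberbach limit to a biholomorphism onto all of $\CC^2$. These demands compete, since each shear that routes $L$ across the $m$-th shell perturbs the earlier geometry and the basin of attraction; the quantitative approximation must therefore be arranged with summable errors and with enough slack, provided by $\varepsilon$-neighbourhoods of the compact pieces, that none of these properties is lost in the limit. A secondary difficulty is establishing the blocking property of $S$ itself, namely that sufficient density on each sphere genuinely prevents a disc from crossing a shell while avoiding $S_m$; this is where the value-distribution content of the theorem really resides.
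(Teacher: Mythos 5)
First, a point of reference: the paper does not prove this statement at all. It is quoted from Buzzard and Forn\ae ss \cite{buzzard} and used as a black box (the introduction explicitly says ``for a proof see \cite{buzzard}''), so there is no internal proof to compare against; your proposal has to be measured against the original argument.

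Your soft part --- building $\Psi^{-1}$ as a limit of shears with summable Runge-type errors so that the basin contains $L$, misses $\xoverline{B}_0$, and converges to a biholomorphism --- is consistent with how the construction actually goes. The genuine gap is in the hard part. Your blocking mechanism, pieces $S_m$ of the curve ``spread densely over the sphere $\{|p|=\rho_m\}$, arranged so that a holomorphic disc cannot cross the $m$-th shell while avoiding $S_m$,'' cannot work as stated, and the analogy with Lemmas \ref{tlem} and \ref{tlem2} is misleading. The sets $A$, $B$, $K$ in those lemmas are built from real hypersurfaces: $2^{N-1}\partial D^2$ genuinely disconnects $2^N D^2$, and the horizontality estimates confine the remaining coordinates so that avoiding $B$ traps the $(w,x)$-image on one side. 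By contrast $\Psi(L)$ is a complex curve in $\CC^2$, of real codimension two; no configuration of pieces of it, however densely placed on a sphere, topologically separates anything, and a disc can always pass around it. Hyperbolicity of the complement of a curve in $\CC^2$ is a value-distribution phenomenon, not a separation phenomenon. What Buzzard and Forn\ae ss actually do is arrange, via the adaptive shear construction you describe, that in every large ball the image curve contains good approximations of long pieces of sufficiently many complex lines in general position; hyperbolicity and the infinite-intersection property then follow from Green's theorem on entire curves omitting four or five lines in general position in $\PP^2$, applied through a Brody/normal-families reparametrization: a sequence of discs violating the Finsler lower bound would yield a limiting entire curve omitting such a configuration. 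You flag this value-distribution input as a ``secondary difficulty,'' but it is the entire content of the theorem, and the density heuristic you propose in its place is false; without replacing it by Green-type theorems on omitted divisors, the proof does not close.
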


Now we choose
\[
\tilde L_n=\bigcup_{k=1}^n\CC\times\{k\}\subset\CC^2_{(w,x)}\,\,.
\]
Then theorem \ref{buzz} gives a Fatou-Bieberbach map $\Phi_n:\CC^2\to\Omega_n\subset\CC^2$ such that $\Omega_n\setminus \tilde L_n$ is Kobayashi hyperbolic and the $w$-curves $f_i:\C\to\C^2$ s.t. $\zeta\mapsto(\zeta,i)$ are still contained in $\Omega_n$.
Now take the Fatou-Bieberbach map $\Psi_n=\Phi_n\times id:\CC^4\to\Omega_n\times\CC^2\subset\CC^4$ and the Engel structure $\DD_n=\Psi_n^*\DD_\infty$.
By construction $\DD_n$ only admits $\DD_n$-lines on the points
\[
L_n=\tilde L_n\times\CC^2=\big\{(w,x,y,z)\in\CC^4:x\in\{1,...,n\}\big\} 
\]
hence completing the proof of theorem \ref{mainthm2}.

\subsection{Proof of theorem \ref{mainthm3}}
For some $R\in\mathbf{R}\setminus\{0\}$, we will consider the subvariety $C_R=\left(\CC\times\{0,1,R\sqrt{-1}\}\right)\cup\left(\{0\}\times\CC\right)\subset \CC^2$.
By theorem \ref{buzz}, there exists a Fatou-Bieberbach domain $\Omega_R\subset \CC^2$ which contains $C_R$, and such that the complement $\Omega_R\setminus C_R$ is Kobayashi hyperbolic. Moreover, any curve $\CC\to\Omega_R$ intersects $C_R$ an infinite number of times.
Denote by $\WW_{R}$, resp. $\WW_{R'}$, the $1$-foliation on $\Omega_R\times \CC^2 $, resp. $\Omega_R'\times\CC^2$, determined by the projections $p:\Omega_R\times\CC^2\to \CC^3$, resp. $p':\Omega_{R'}\times\CC^2\to \CC^3$, given by $(w,x,y,z)\mapsto (x,y,z)$. 
We introduce also the projections $\pi:\Omega_R\times\CC^2\to \CC$ and $\pi':\Omega_{R'}\times\CC^2\to \CC$ given by $(w,x,y,z)\mapsto x$ and the notation $V_R=\pi^{-1}\{0,1,R\sqrt{-1}\}$ and $V'_{R'}={\pi'}^{-1}\{0,1,R'\sqrt{-1}\}$.
Notice that $V_R$, resp. $V'_{R'}$, consists exactly of the points of $\Omega_R$, resp. $\Omega_{R'}$ through which a $\WW_{R}$-line, resp. $\WW_{R'}$-line, passes.

\begin{Lem}\label{notbiholomorphic}
Suppose that $R,R'\in\mathbf{R}\setminus\{0\}$ and $R\neq R'$. 
Then there exists no biholomorphic map $\Phi:\Omega_R\times \CC^2 \to\Omega_{R'}\times \CC^2$ such that $\Phi_\ast(\WW_{R})=\WW_{R'}$.
\begin{proof}
Suppose such a $\Phi$ exists and consider the map $h:\CC\to\CC$ given by $h=\pi'\circ \Phi \circ \iota$, where $\iota$ is the inclusion $\iota(\zeta)=(0,\zeta,0,0)\in \Omega_R\times \CC^2$.
Notice that horizontal curves in $\WW_{R}$ must map to horizontal curves in $\WW_{R'}$.
Moreover, we have $h^{-1}\{0,1,R'\sqrt{-1}\}=\{0,1,R\sqrt{-1}\}$.
It follows that we have a biholomorphic map $\left.\Phi\right|_{V_R}:V_R\to V'_{R'}$.
This implies in particular that $h:\{0,1,R\sqrt{-1}\}\to\{0,1,R'\sqrt{-1}\}$ is bijective.
Since $h$ is non-constant, it either has an essential singularity or a pole at infinity.

If $h$ has an essential singularity at infinity, then by the big Picard theorem $h$ takes every value in $\CC$ infinitely many times, with one possible exception.
This contradicts the fact that $h:\{0,1,R\sqrt{-1}\}\to\{0,1,R'\sqrt{-1}\}$ is bijective. 

Otherwise, $h$ is a polynomial with exactly one zero, so it must be linear.
On the other hand, $h(\{0,1,R\sqrt{-1}\})=\{0,1,R'\sqrt{-1}\}$, which is impossible for $R\neq R'$.
\end{proof}

\end{Lem}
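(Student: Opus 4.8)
The plan is to assume such a $\Phi$ exists and distil from it a single entire function on $\CC$ whose existence is obstructed once $R\neq R'$. Since $\Phi_\ast\WW_R=\WW_{R'}$, the map $\Phi$ carries leaves of $\WW_R$ to leaves of $\WW_{R'}$, hence $\WW_R$-lines to $\WW_{R'}$-lines; consequently it sends the locus swept out by the former onto that swept out by the latter, so it restricts to a biholomorphism $\Phi|_{V_R}:V_R\to V'_{R'}$. Because $V_R=\pi^{-1}\{0,1,R\sqrt{-1}\}$ is a disjoint union of three \emph{slabs}, $\Phi$ must match these three components bijectively with the three components of $V'_{R'}$.

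To make this computable I would probe $V_R$ with a single transverse entire curve. The inclusion $\iota(\zeta)=(0,\zeta,0,0)$ lands in $\Omega_R\times\CC^2$ (since $\{0\}\times\CC\subset C_R\subset\Omega_R$) and meets $V_R$ exactly at the parameters $\zeta\in\{0,1,R\sqrt{-1}\}$. Setting $h=\pi'\circ\Phi\circ\iota:\CC\to\CC$, the identity $\Phi(V_R)=V'_{R'}$ becomes $h^{-1}\{0,1,R'\sqrt{-1}\}=\{0,1,R\sqrt{-1}\}$, while the matching of components forces $h$ to restrict to a bijection $\{0,1,R\sqrt{-1}\}\to\{0,1,R'\sqrt{-1}\}$.

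Next I would show that $h$ is affine. As $h$ is entire, nonconstant and has three finite fibres (over $0$, $1$ and $R'\sqrt{-1}$), the big Picard theorem rules out an essential singularity at infinity, so $h$ is a polynomial; a multiplicity count in $h^{-1}\{0,1,R'\sqrt{-1}\}=\{0,1,R\sqrt{-1}\}$ then forces the degree to be $1$ (for degree $d\geq 2$ each marked point would be a preimage of multiplicity $d$, so $h-h(0)$ would be a pure $d$-th power while $h-h(1)$ would have $d$ distinct roots instead of the single root of multiplicity $d$ required). Hence $h$ is a complex-affine bijection of the two triples.

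I expect the main obstacle to be the final rigidity step: deciding when $z\mapsto az+b$ can carry $\{0,1,R\sqrt{-1}\}$ onto $\{0,1,R'\sqrt{-1}\}$. Running over the six possible matchings, this is governed by the affine orbit of the modulus $R\sqrt{-1}$, and a short computation shows that its only purely imaginary members are $R\sqrt{-1}$ and $-\sqrt{-1}/R$; thus the triples are affinely inequivalent, and the contradiction is reached, precisely when $R'\notin\{R,-1/R\}$. The value $R'=R$ is excluded by hypothesis, and the delicate point I anticipate is the reciprocal case $R'=-1/R$, for which an affine bijection genuinely exists; excluding it seems to require finer information than the leaf space retains --- for instance the incompatibility of the fibre ($w$-slice) component of $\Phi$ over the good and bad leaves --- in order to break the $R\leftrightarrow-1/R$ symmetry.
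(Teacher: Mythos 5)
Your argument tracks the paper's own proof step for step: the same probe curve $\iota(\zeta)=(0,\zeta,0,0)$, the same entire function $h=\pi'\circ\Phi\circ\iota$, the same identity $\Phi(V_R)=V'_{R'}$ yielding $h^{-1}\{0,1,R'\sqrt{-1}\}=\{0,1,R\sqrt{-1}\}$, the same big Picard dichotomy forcing $h$ to be a polynomial, and essentially the same multiplicity count forcing degree one. The one place where you part ways with the paper is the final rigidity step, and there you are right and the paper is too hasty. The paper asserts that an affine $h$ with $h(\{0,1,R\sqrt{-1}\})=\{0,1,R'\sqrt{-1}\})$ is ``impossible for $R\neq R'$''; in fact the affine orbit of the triple $\{0,1,\tau\}$ is governed by the six ratios $\tau$, $1/\tau$, $1-\tau$, $1-1/\tau$, $1/(1-\tau)$, $\tau/(\tau-1)$, and for $\tau=R\sqrt{-1}$ exactly two of these are purely imaginary, namely $R\sqrt{-1}$ and $-\sqrt{-1}/R$ --- precisely your computation. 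Concretely, for $R'=-1/R$ the map $h(\zeta)=-\sqrt{-1}\,\zeta/R$ sends $0\mapsto 0$, $R\sqrt{-1}\mapsto 1$ and $1\mapsto R'\sqrt{-1}$, so it carries $\{0,1,R\sqrt{-1}\}$ onto $\{0,1,R'\sqrt{-1}\}$ even though $R\neq R'$. Hence the argument, as it stands both in your proposal and in the paper, establishes the lemma only for $R'\notin\{R,-1/R\}$.

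So the gap you flag is genuine, but it is a gap in the paper's proof rather than a defect of your reconstruction: nothing in the leaf-space invariant that either argument extracts (the affine equivalence class of the three marked leaves on the transverse line) can distinguish $\DD_R$ from $\DD_{-1/R}$, so the reciprocal case really does need extra input, as you anticipate; the lemma may still be true for the pair $(R,-1/R)$, but it is not proved by this method. For the purposes of theorem \ref{mainthm3} the standard repair is to shrink the parameter set: for $R,R'>0$ one has $-1/R<0$, so affine rigidity does force $R=R'$, and one still obtains uncountably many pairwise non-isomorphic Engel structures. You should either state your result with such a restriction or supply the missing argument separating $\DD_R$ from $\DD_{-1/R}$; everything else in your write-up is correct and, at the Picard step, slightly cleaner than the original (you argue from finiteness of the fibres rather than from a bijectivity that has not yet been established at that point).
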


Now given the Fatou-Bieberbach map $\Phi_R:\CC^4\to\Omega_R\times\CC^2\subset\CC^4$ we define $\DD_{R}:=\Phi_R^*\DD_{st}$ and theorem \ref{mainthm3} is a direct consequence of lemma \ref{notbiholomorphic}.

\subsection*{Acknowledgements.} We are grateful to our advisor D. Kotschick for proposing this problem to us, and for several stimulating discussions. 
The example in the beginning of section \ref{example}, which is the starting point for theorems \ref{mainthm2} and \ref{mainthm3}, was suggested by him.


\begin{thebibliography}{99}
\bibitem{buzzard}
	G. T. Buzzard and J. E. Forn\ae ss,
	\emph{An embedding of $\CC$ in $\CC^2$ with hyperbolic complement}\\
	Math. Ann. 306 (1996), 539-546. 


\bibitem{cartan}
	E. Cartan,
	\emph{Sur quelques quadratures dont l'\'el\'ement diff\'erentiel contient des fonctions arbitraires}\\
	Bull. Soc. Math. France 29 (1901), 118-130. 


\bibitem{cp3}
	R. Casals, J.L. Perez, A. del Pino and F. Presas,
	\emph{Existence $h$-Principle for Engel structures}\\
	Invent. math. (2017), DOI:10.1007/s00222-017-0732-6.


\bibitem{forstneric}
	F. Forstneri\v{c},
	\emph{Hyperbolic complex contact structures on $\CC^{2n+1}$}\\
	J. Geom. Anal. (2017), DOI:10.1007/s12220-017-9800-9.


\bibitem{mcduff}
	D. McDuff,
	\emph{Applications of convex integration to symplectic and contact geometry}\\
	Ann. Inst. Fourier 37 (1978), 107-133.	


\bibitem{presas}
	F. Presas and L.E. Sol\'a Conde,
	\emph{Holomorphic Engel structures}\\
	Rev. Mat. Complut. (2014) 27: 327. 


\bibitem{vogel}
	T. Vogel,
	\emph{Existence of Engel structures}\\
	Ann. of Math. (2)  169 (2009), no. 1, 79-137.
  
\end{thebibliography}
\end{document}